\newtheorem{theorem}{Theorem}[section]
\newtheorem{lemma}[theorem]{Lemma}
\newtheorem{corollary}[theorem]{Corollary}
\theoremstyle{remark}
\newtheorem{remark}[theorem]{Remark}
\theoremstyle{definition}
\newtheorem{example}[theorem]{Example}
\DeclareMathOperator{\Ker}{{\mathrm{ker}}}
\DeclareMathOperator{\image}{{\mathrm{Im}}}
\DeclareMathOperator{\Hom}{{\mathrm{Hom}}}
\DeclareMathOperator{\PG}{{\mathrm{PG}}}
\DeclareMathOperator{\AG}{{\mathrm{AG}}}
\newcommand{\abs}[1]{|#1|}	%absolute value
\newcommand{\Fq}{{\mathbb {F}_q}} 
\newcommand{\Fp}{{\mathbb {F}_p}} 
\newcommand{\K}{{\mathcal {K}}} 
\newcommand{\R}{{\mathbb {R}}} 
\newcommand{\Q}{{\mathbb {Q}}} 
\newcommand{\CC}{{\mathcal {C}}} 
\newcommand{\Trace}{{\mathrm {Tr}}} 
\DeclareMathOperator{\rank}{{\mathrm {rank}}} 	%rank
\begin{document}
\title[]{Linear representations of finite geometries and associated LDPC codes.}

%\begin{comment}
\author[Sin, Sorci and Xiang]{Peter Sin, Julien Sorci and Qing Xiang}
\address{Peter Sin, Department of Mathematics, University of Florida, P. O. Box 118105, Gainesville FL 32611, USA}\email{sin@ufl.edu}
\address{Julien Sorci, Department of Mathematics, University of Florida, P. O. Box 118105, Gainesville FL 32611, USA}\email{jsorci@ufl.edu}
\address{Qing Xiang, Department of Mathematical Sciences, University of Delaware, Newark, DE 19716, USA} \email{qxiang@udel.edu}
\thanks{The third author was partially supported by NSF grant DMS-1600850}
%\end{comment}
\date{}

%}}}

%{{{ abstract 
\begin{abstract}
The {\it linear representation} of a subset of a finite projective space
is an incidence system of affine points and lines determined by the subset.
In this paper we use character theory to show that the rank of the incidence matrix has a direct geometric interpretation in terms of certain hyperplanes.  
We consider the LDPC codes defined by taking the incidence matrix and its transpose as  parity-check matrices, and in the former case prove a conjecture of Vandendriessche that the code is generated by words of minimum weight called plane words. In the latter case we compute the minimum weight in several cases and provide explicit constructions of minimum weight codewords. 
\end{abstract}

%}}}
\maketitle
\section{Introduction} 
Codes with sparse parity-check matrix were first considered in the seminal PhD dissertation of Gallagher \cite{RGG}. Such a code is called a low-density parity-check, or LDPC, code. While Gallagher's ideas were largely overlooked for many years, since the 1990s there has been a resurgence of interest in constructing LDPC codes due to their relatively fast decoding algorithms while still achieving high rates of transmission. One method of constructing these codes is by taking the incidence matrix of a finite geometry as parity-check matrix, which is the method that we shall consider here.

Let $q= p^e$ be a prime power and
$n\ge 2$ be an integer. Let $E$ be an $(n+1)$-dimensional vector space over $\Fq$ and $V$ an $n$-dimensional subspace. Then $H:=\PG(V)\cong\PG(n-1,q)$ is a hyperplane of
$\PG(E)\cong\PG(n,q)$. Let $P:=\PG(E)\setminus H \cong \AG(n,q)$ be the complementary affine space, which we shall often view as a set of $q^n$ affine points with $H$ as the hyperplane at infinity. Additionally, if $\ell$ is an affine line of $P$ with point at infinity $u$ in $H$, we will at times refer to $u$ as the direction of the line $\ell$, and thus view the points of $H$ as the directions of affine lines in $P$. 

Fixing an arbitrary subset $\K$ of $H$, we consider the point-line incidence system whose point set is $P$ and whose line set $L$ is the set of affine lines of $P$ whose direction is in $\K$. Each point of $\K$ is the direction of $q^{n-1}$ parallel lines in $L$, so that $\abs{L}=q^{n-1}\abs{\K}$.
A line in $L$ will be viewed as a set of $q$ points of $P$ and we define
a line to be incident to its points.
This point-line incidence system is called the {\it linear representation}
of the set $\K$ and is denoted by $T_{n-1}^*(\K)$ (Figure~\ref{T*}). It has been studied for many choices of $\K$ 
in \cite{dCvM}, and the automorphism group has been studied in \cite{CRV2}.
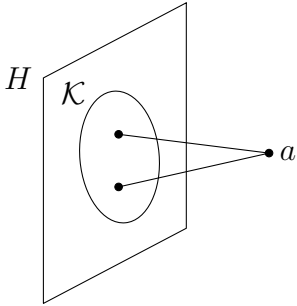
\begin{figure}\label{T*}
\begin{tikzpicture}[scale=1]
	\draw (4,1) -- (4,4) -- (5.9,5) -- (5.9,2) -- (4,1);
	\draw[rotate=5] (5.25,2.5) ellipse (15pt and 25pt);
	\draw[fill] (5,3.25) circle [radius=0.05]; 
	\draw[fill] (5,2.55) circle [radius=0.05];
	\draw[fill] (7,3) circle [radius=0.05];
	\draw (5,3.25) -- (7,3) -- (5,2.55);
	\node at (4.4,3.8) {$\mathcal K$};
	\node[left] at (4,4) {$H$};
	\node[right] at (7,3) {$a$};
\end{tikzpicture}
\caption{A point $a$ and two lines in the geometry $T_{n-1}^*(\K)$.}
\end{figure}

We introduce homogeneous coordinates $X_0$, $X_1$,..,$X_n$ in $\PG(n,q)$ and without loss of generality let $H$ be the hyperplane $X_0=0$. We fix arbitrary orderings on $P$ and $L$ and let $N$ denote the point-line incidence matrix.  
We have defined $N$ as an integer matrix, but we can consider it as
a matrix over any field.  The following theorem gives a geometric interpretation of the rank of $N$.

\begin{theorem}\label{main} Let $F$ be a field in which $q\neq 0$.
Then $\rank_F N$ is  equal to the number of functions  in 
the dual space $V^*= \textnormal{Hom}(V,\Fq)$ that take the value zero at some point of $\K$.
In geometric terms, $\rank_F N=1+(q-1)h_\K$, where $h_\K$ is the number of hyperplanes in
the projective space $H$ that have nonempty intersection with $\K$.
\end{theorem}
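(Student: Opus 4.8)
The plan is to exploit the action of the additive group $G=(\Fq^n,+)$ of translations, which permutes both the affine points $P$ and the line set $L$, and to diagonalize the incidence operator by additive characters (the character-theoretic method promised by the title). First I would fix coordinates so that $P$ is identified with $\Fq^n$ and each point of $H$ with a one-dimensional subspace $\langle y\rangle\le V\cong\Fq^n$; a line of $L$ in direction $[y]\in\K$ is then a coset $x+\langle y\rangle$, and the corresponding column of $N$ is its characteristic vector $\mathbf 1_{x+\langle y\rangle}\in F^P$. Thus $\rank_F N$ equals $\dim_F W$, where $W=\mathrm{span}\{\mathbf 1_\ell:\ell\in L\}$ is the column span. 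Since rank is unchanged under field extension, I would pass to $F'=F(\zeta)$ with $\zeta$ a primitive $p$-th root of unity; this root exists precisely because $q\ne0$ in $F$ forces $\mathrm{char}\,F\ne p$.

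Next I would decompose the permutation module $F'^{P}$ into the one-dimensional eigenspaces $M_b=F'\chi_b$ of the additive characters $\chi_b(a)=\zeta^{\Trace(b\cdot a)}$, $b\in\Fq^n$, noting that $\{\chi_b\}$ is a basis of $F'^{P}$ and that each $M_b$ is $G$-stable. The key computation is the character sum $\sum_{a\in x+\langle y\rangle}\chi_b(a)=\chi_b(x)\sum_{t\in\Fq}\zeta^{\Trace(t\,b\cdot y)}$, which equals $q\,\chi_b(x)$ when $b\cdot y=0$ and vanishes otherwise (the condition $b\cdot y=0$ being independent of the representative $y$). Equivalently, for a fixed direction $y$ the line-vectors span exactly the space of functions constant on cosets of $\langle y\rangle$, namely $\bigoplus_{b\cdot y=0}M_b$. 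Summing over the directions of $\K$ gives $W=\bigoplus_{b\in S}M_b$, where $S=\{b\in\Fq^n:\ b\cdot y=0\text{ for some }[y]\in\K\}$, whence $\rank_F N=\abs S$. Identifying $b\in\Fq^n$ with the functional $f\in V^*$ given by $f(y)=b\cdot y$, the set $S$ is precisely the set of functionals vanishing at some point of $\K$, which is the first assertion.

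For the geometric reformulation I would split off the zero functional, which vanishes on all of $\K$ and contributes $1$, and count the nonzero ones: a nonzero $f\in V^*$ vanishes at some point of $\K$ exactly when the hyperplane $\PG(\Ker f)$ of $H$ meets $\K$, and each such hyperplane is $\Ker f$ for precisely $q-1$ nonzero functionals $f$. This gives $\abs S=1+(q-1)h_\K$ and completes the proof.

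The step I expect to be the main obstacle is showing that $W$ equals $\bigoplus_{b\in S}M_b$ rather than being merely contained in it. The containment $W\subseteq\bigoplus_{b\in S}M_b$ is immediate from the character sum, but the reverse inclusion requires the observation that the line-vectors of a single direction already fill the entire coset-function space $\bigoplus_{b\cdot y=0}M_b$. The hypothesis $q\ne0$ is essential both to produce the character $\zeta$ and to ensure the values $q\,\chi_b(x)$ are nonzero, so that each $\chi_b$ with $b\in S$ genuinely appears in $W$.
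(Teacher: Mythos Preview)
Your proposal is correct and follows essentially the same character-theoretic approach as the paper: pass to an extension containing a primitive $p$-th root of unity, decompose $F'^P$ into one-dimensional character eigenspaces indexed by $V^*$, and compute which characters appear in the column span via the vanishing/nonvanishing of the character sum over a line. The only cosmetic difference is that the paper packages the ``reverse inclusion'' step via a general lemma (the $FV$-submodule generated by $g$ has the nonroots of $g$ as a basis, proved using the idempotents $e_\lambda$), whereas you argue it by a direct dimension count on the span of the coset indicator functions for a fixed direction; both arguments are valid and equally short.
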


\begin{remark}\label{rem1}
If $q=0$ in $F$, then $\rank_FN$ will be bounded above
by $\rank_\Q N$, which is given by the theorem. However, small examples
show that inequality will be strict in general, and we have no conjecture
yet for the  exact value of $\rank_FN$ in this case. 
\end{remark}

We define the $F$-codes $\mathcal C$ and $\mathcal D$ to be the codes with parity-check matrices $N$ and $N^T$, respectively.  Each row of $N$ has weight $|\mathcal K|$ and each column has weight $q$ so that $\mathcal C$ and $\mathcal D$ are LDPC codes of lengths $|\mathcal K|q^{n-1}$ and $q^n$, respectively, with dimensions given by Theorem~\ref{main}. The code $\mathcal C$ has been studied in \cite{V}, \cite{PSV} and \cite{V2}. In \cite{KLF}, Kou et al. considered the codes $\mathcal C$ and $\mathcal D$ for $F=\mathbb{F}_2$ and when $\mathcal K$ is the set of all points of $H$, referring to $\mathcal C$ as the type-II geometry-\textbf{G} LDPC code and $\mathcal D$ as the type-I geometry-\textbf{G} LDPC code. In this case the geometry $T_{n-1}^*(\mathcal K)$ is isomorphic to $\textnormal{AG}(n,q)$, so the parity-check matrix for $\mathcal C$ is the full incidence matrix of $\textnormal{AG}(n,q)$ and the parity-check matrix for $\mathcal D$ is the transpose. Kou et al. noted that $\mathcal C$ is one-step majority-logic decodable and can correct $\lfloor q/2 \rfloor$ errors with this decoding scheme, so that the minimum distance of $\mathcal C$ is at least $q+1$. Similarly, they also noted that using the same decoding algorithm, one can correct $\lfloor |\mathcal K|/2 \rfloor$ errors in $\mathcal D$ so that $\mathcal D$ has minimum distance at least $|\mathcal K|+1$.   

In \cite{TXK}, Tang et al. studied the codes $\mathcal C$ and $\mathcal D$ when $F= \mathbb{F}_2$ and for an arbitrary subset of points $\mathcal K$ in $H$, referring to $\mathcal C$ as the \textit{Complementary Gallager Euclidean Geometry LDPC code} and $\mathcal D$ as the \textit{Gallager Euclidean Geometry LDPC code}. For these more general codes, they also provided the lower bounds $q+1$ and $|\mathcal K|+1$ on the minimum distances of $\mathcal C$ and $\mathcal D$, respectively, by observing that any $q$ columns of $N$ or $|\mathcal K|$ columns of $N^T$ are linearly independent. We note here that these lower bounds  hold for any field $F$, and any prime power $q$ by the following argument. A codeword of $\mathcal D$ is an element of $F^P$, whose support is a set of points in $P$. If $x$ is a codeword of $\mathcal D$, and $s$ is a point in the support of $x$, then each of the $|\mathcal K|$ lines meeting $s$ must contain a further point of the support. These further points are distinct, hence the weight of $x$ is at least $|\mathcal K|+1$. Similarly, any codeword $x$ in $\mathcal C$ has the property that if $\ell$ is a line in the support of $x$, then for each of the $q$ points incident to $\ell$ there is a further line of the support meeting $\ell$ at that point, so that $x$ must have weight at least $q+1$.

The proof of Theorem~\ref{main} will be given in the next section, but we can
sketch the main ideas now. We consider the actions of the additive group of the
vector space $V$ on the affine space $P$ and on the set $L$ of lines. These actions turn the spaces $F^P$ and $F^L$ of $F$-valued functions on $P$ and $L$ into $FV$-modules, where $FV$ is the group algebra of $V$, and the incidence relation then defines
an  $FV$-module homomorphism  $F^L\to F^P$, whose matrix is $N$.
The regular action of $V$ on $P$ gives in addition an $FV$-isomorphism
from $F^P$ to the space $F^V$ of $F$-valued functions on $V$.
Thus, $\rank_FN$ equals the dimension of the image in $F^V$ of the
composite homomorphism $F^L\to F^V$. The standard bases for  $F^L$, $F^P$ and $F^V$
consist of the characteristic functions of elements. When $F$ contains a primitive $p$-th root of unity, $F^V$ has a second natural basis, namely the group $\hat V=\Hom(V, F^\times)$ of $F$-characters of $V$. Using character theory, we can reduce the rank problem
to one of counting certain characters. Finally, we make use of  a natural bijection between
$\hat V$ and the dual space $V^*=\Hom_\Fq(V,\Fq)$  to arrive at Theorem~\ref{main}.

In \S\ref{planewords} we consider the $F$-codes $\mathcal C$ and $\mathcal D$ and prove a conjecture of P. Vandendriessche \cite{V2}
that the code $\mathcal C$ is generated by certain codewords called {\it plane words}, which are
known to be words of minimum weight. We will also define words in $\mathcal D$ analogous to plane words called \textit{capacitor words} and  show that they are codewords that span $\mathcal D$. 
In \S\ref{app} we apply Theorem~\ref{main} in particular cases of $n$ and $\K$.
In the case where $\K$ is a rational normal curve minus its point at infinity
the sets $P$ and $L$  form the bipartition of the vertex set of a bipartite
graph called the Wenger graph. The Wenger graphs have been studied extensively;
their automorphism groups have been found in \cite{CRV} and their spectra
determined in \cite{CLL}. Our theorem shows that the
rank of the adjacency matrix of a Wenger graph 
over any field $F$ in which $q\neq0$,
is the same as the real rank, hence equal to the matrix size minus the
multiplicity of zero as an eigenvalue of the adjacency matrix. The 
multiplicity of every eigenvalue has been computed in \cite{CLL}.
However we shall derive the rank independently, directly from Theorem~\ref{main}. In the special case $n=3$, we obtain a simple new proof for the dimensions
of the LDPC codes called $\mathrm{LU}(3,q)$, that were first computed in \cite{SX}
for fields of characteristic $2$, and for other fields with $q\neq0$ in \cite{V}.

\section{Proof of Theorem~\ref{main}}\label{proof}
In proving Theorem~\ref{main} $F$ may be replaced by any extension field $F'$,
as  $\rank_{F'}N=\rank_FN$, so we assume from now on that $F$ is a field in which $q\neq 0$, that
contains a primitive $p$-th root of unity $\omega$. Let $F^L$ be the space of $F$-valued functions on $L$ and $F^P$ the space of $F$-valued functions on $P$. These vector spaces are actually $FV$-permutation modules, as $V$
permutes the sets $L$ and $P$ by an action that we now describe. A point of $P$ has homogeneous coordinates $(1:a_1:\cdots:a_n)$ with $a_i\in\Fq$ for all $i$, and together with $(0:b_1:\cdots:b_n)\in \K$ determines an affine line consisting of the $q$ points $(1:a_1+tb_1:\cdots:a_n+tb_n)$ for $t\in \Fq$.
The additive group of $V$ acts regularly on $P$ via the action
\[ (0,v_1,\dots,v_n) \cdot (1:a_1:\cdots:a_n) = (1:a_1+v_1:\cdots:a_n+v_n)   \] 
From the description of lines in $L$ above it follows that the
action of the group $V$ on $P$ induces an incidence-preserving action on $L$, where the vector $(0,v_1,\dots,v_n)\in V$  moves the line $\{(1:a_1+tb_1:\cdots:a_n+tb_n)\mid t\in \Fq\}$ to the line $\{(1:a_1+v_1+tb_1:\cdots:a_n+v_n+tb_n)\mid t\in \Fq\}$.

There are natural bases for the vectors spaces $F^L$ and $F^P$: For $\ell \in L$ we denote by $[\ell]\in F^L$ the characteristic function of the element $\ell$, which takes value one at $\ell$ and zero elsewhere. These form a basis of $F^L$. Similarly, for $a\in P$, we let $\chi_a\in F^P$ denote the
characteristic function of $a$, and the $\chi_a$ form a basis of $F^P$. More generally, for any subset $A$ of $P$ we let $\chi_A$ denote the characteristic function of $A$.  We consider the incidence map $\eta:F^L\to F^P$, defined by $[\ell] \mapsto \chi_\ell=\sum_{a\in \ell}\chi_a$. With respect to the bases $\{[\ell]\}_{\ell \in L}$ of $F^L$ and $\{\chi_a\}_{a\in P}$ of $F^P$, the matrix of $\eta$ is $N$, considered as a matrix over $F$.

The set  $P$ is isomorphic, as a $V$-set, to the set $V$, with the left regular action, under the map taking each point $(1:a_1:a_2:\cdots:a_n)$ to the vector  $(0,a_1,a_2,\ldots,a_n)$. Hence we have an induced $FV$-module isomorphism $\sigma:F^P\to F^V$, 
mapping the characteristic function of a point to the characteristic function of the corresponding vector. 
We shall study the composite map $\sigma\eta:F^L\to F^V$, since the dimension of its image
is equal to $\rank_FN$.  The basis element $[\ell]$ for $\ell \in L$ is mapped to the sum of all the  characteristic functions of vectors corresponding to points of $\ell$.

Next, we briefly present the character theory necessary for the proof of Theorem~\ref{main}. For $\theta\in V^*=\Hom_\Fq(V,\Fq)$, the dual vector space to $V$,  we can define an $F$-character $\lambda$ by $\lambda(v)=\omega^{\Trace(\theta(v))}$, for all $v\in V$, where $\Trace$ is the trace map from $\Fq$ to $\Fp$.
In this way we obtain a bijection from $V^*$ to the group
$\hat V:=\Hom(V,F^\times)$ of $F$-characters of $V$.
We shall denote the character corresponding to the linear function $\theta$ by  $\lambda_\theta$ and the linear function corresponding to a character $\lambda$ by $\theta_\lambda$.
An element of $V^*$ can be represented in the usual way using dual coordinates as 
$\theta=[c_1,c_2,\ldots, c_n]$, where $\theta(0,v_1,\ldots, v_n)=c_1v_1+c_2v_2+\cdots+c_nv_n$  for $(0,v_1,v_2,\ldots, v_n)\in V$. 
Note that if $W$ is an $\Fq$-subspace of $V$, then for $\theta\in V^*$ we have
\begin{equation}\label{trace}
\Trace(\theta(W))=\begin{cases} \Fp,\quad\text{if $W\nsubseteq\Ker(\theta)$},\\ 
{\{0\}},\quad\text{if $W\subseteq\Ker(\theta)$.}
\end{cases}
\end{equation}

\begin{lemma}\label{kertheta}
If $W$ is an $\Fq$-subspace of $V$ and $\lambda\in\hat V$, then
$\sum_{w\in W}\lambda(w)\neq 0$ if and only if $W\subseteq\Ker(\theta_\lambda)$.
\end{lemma}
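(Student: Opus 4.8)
The plan is to use the explicit description of $\lambda$ in terms of its associated linear functional together with equation~\eqref{trace}, thereby reducing the statement to an elementary character-sum computation carried out in the field $F$. Writing $\theta=\theta_\lambda$, we have $\lambda(w)=\omega^{\Trace(\theta(w))}$ for every $w\in W$, so that
\[
\sum_{w\in W}\lambda(w)=\sum_{w\in W}\omega^{\Trace(\theta(w))},
\]
and the whole question becomes one of evaluating this sum according to whether or not $W\subseteq\Ker(\theta)$.

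First I would dispose of the direction where $W\subseteq\Ker(\theta)$. In this case $\theta$ vanishes on $W$, so $\Trace(\theta(w))=0$ and every summand equals $\omega^0=1$; hence the sum equals $|W|=q^{\dim_{\Fq}W}$. Since $q\neq0$ in $F$ by hypothesis, this is a nonzero element of $F$, which gives the ``if'' direction.

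For the converse I would argue by contraposition, assuming $W\nsubseteq\Ker(\theta)$ and showing that the sum vanishes. The composite $w\mapsto\Trace(\theta(w))$ is an $\Fp$-linear map $W\to\Fp$, and by~\eqref{trace} its image is all of $\Fp$, so the map is surjective. Consequently its kernel has index $p$ in $W$, each of the $p$ fibers is a coset of that kernel, and so each fiber contains exactly $|W|/p$ elements. Grouping the terms of the sum according to the value $j\in\{0,1,\dots,p-1\}$ taken by $\Trace(\theta(w))$ then yields
\[
\sum_{w\in W}\omega^{\Trace(\theta(w))}=\frac{|W|}{p}\sum_{j=0}^{p-1}\omega^{j}=0,
\]
because $\omega$ is a primitive $p$-th root of unity and the sum of all $p$-th roots of unity is zero.

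The only genuinely delicate point is the bookkeeping over the field $F$ rather than over $\C$: I must check that $|W|/p$ is a well-defined integer, which holds because $W\nsubseteq\Ker(\theta)$ forces $W\neq\{0\}$ and hence $p\mid|W|=q^{\dim_{\Fq}W}$, and that both the vanishing $\sum_{j=0}^{p-1}\omega^{j}=0$ and the nonvanishing of $|W|$ are valid in $F$ itself, which they are precisely because $F$ contains the primitive $p$-th root $\omega$ and because $q\neq0$ in $F$. I expect the equidistribution of the fibers of the trace-composed functional to be the crux of the computation, but it follows at once from the surjectivity of an $\Fp$-linear map onto $\Fp$, so no serious obstacle remains.
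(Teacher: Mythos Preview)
Your proof is correct and follows essentially the same approach as the paper's own argument: both directions use the observation that $\lambda(w)=\omega^{\Trace(\theta_\lambda(w))}$, so that when $W\subseteq\Ker(\theta_\lambda)$ the sum equals $|W|\neq 0$, and when $W\nsubseteq\Ker(\theta_\lambda)$ the surjectivity of $\Trace\circ\theta_\lambda$ from~\eqref{trace} makes the sum a multiple of $\sum_{j=0}^{p-1}\omega^j=0$. Your write-up is simply more explicit about the fiber-counting and the bookkeeping in $F$, but the underlying idea is identical.
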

\begin{proof} Suppose $W\nsubseteq\Ker(\theta_\lambda)$. Then by \eqref{trace}
the composite map $\Trace \circ \theta_\lambda$ is a surjective group homomorphism
from $W$ to $\Fp$. Therefore, $\sum_{w\in W}\lambda(w)$ is a multiple of
the complete sum of $p$-th roots of unity, hence equal to zero. Conversely,
if $W\subseteq\Ker(\theta_\lambda)$, then $\sum_{w\in W}\lambda(w)=\abs{W}\neq 0$.
\end{proof}

Let $F^V$ be the space of $F$-valued functions on $V$, with $V$ acting
by the formula $(vg)(w)=g(w-v)$, for $v$, $w\in V$ and $g\in F^V$.
The space $F^V$ has two bases. The first is the set of characteristic functions
$\delta_v$ of elements $v\in V$; the second is $\hat V$. 
By the orthogonality relations we have, for $v\in V$,
\begin{equation}\label{delta}
\delta_v=\frac{1}{\abs{V}}\sum_{\lambda\in\hat V}\lambda(-v)\lambda
\end{equation}

For $g\in F^V$, we write $g=\sum_{\lambda\in\hat V}a_\lambda\lambda$ as
an $F$-linear combination of characters, where $a_\lambda = \frac{1}{|V|} \sum_{v \in V} g(v) \lambda(-v)$; if $a_\lambda =0$ then we say that $\lambda$ is a root of $g$; otherwise $\lambda$ is a nonroot of $g$. Define
$\hat V_g:=\{\lambda\in\hat V\mid\, a_\lambda\neq 0\}$, the set of nonroots of $g$.

The following is a special case of a well-known general principle. 
\begin{lemma}\label{FVg} For each $g\in F^V$, the set $\hat V_g$ is a basis for the
$FV$-submodule of $F^V$ generated by $g$.
\end{lemma}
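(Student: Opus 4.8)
The plan is to exploit the fact that $\hat V$ is a group under pointwise multiplication and that the $V$-action on $F^V$ is diagonalized by this basis. First I would record the key computation describing how $V$ acts on a character $\lambda\in\hat V$: for $v,w\in V$ we have $(v\lambda)(w)=\lambda(w-v)=\lambda(-v)\lambda(w)$, so $v\lambda=\lambda(-v)\,\lambda$. Thus each character $\lambda$ spans a one-dimensional $FV$-submodule, and the characters are simultaneous eigenvectors for the action of $V$. This is the structural observation that makes everything transparent, and it is really the statement that $F^V\cong FV$ decomposes as a direct sum of the distinct one-dimensional submodules $F\lambda$, indexed by $\hat V$ (here I am using that $F$ contains a primitive $p$-th root of unity, so $|V|$ characters exist and they are linearly independent, forming a basis as already noted).

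Next I would turn to the element $g=\sum_{\lambda\in\hat V}a_\lambda\lambda$ and the $FV$-submodule $M$ it generates, namely $M=FV\cdot g$, the $F$-span of $\{vg\mid v\in V\}$. Using the eigenvector computation, $vg=\sum_{\lambda}a_\lambda\lambda(-v)\,\lambda$, so every element of $M$ lies in the span of $\{\lambda\mid a_\lambda\neq 0\}=\hat V_g$; this gives the easy inclusion $M\subseteq \mathrm{span}_F(\hat V_g)$. For the reverse inclusion I would show that each individual character $\lambda$ with $a_\lambda\neq 0$ already lies in $M$. The clean way to isolate a single $\lambda_0$ is to apply the projection onto its eigenspace: since the eigenvalues $\{\lambda(-v)\}$ separate the characters, one can produce an element of $FV$ that acts as the identity on $F\lambda_0$ and as zero on every other $F\lambda$. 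Concretely, the central idempotent $e_{\lambda_0}=\frac{1}{|V|}\sum_{v\in V}\lambda_0(v)\,v\in FV$ satisfies $e_{\lambda_0}\lambda=\delta_{\lambda,\lambda_0}\lambda$, so $e_{\lambda_0}g=a_{\lambda_0}\lambda_0\in M$, and dividing by $a_{\lambda_0}\neq 0$ shows $\lambda_0\in M$. Hence $\mathrm{span}_F(\hat V_g)\subseteq M$, giving equality.

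Finally I would observe that $\hat V_g$ is not merely a spanning set but a basis of $M$: it is a subset of the linearly independent set $\hat V$, so it is automatically linearly independent, and combined with the spanning property just established it is a basis. I expect the main obstacle to be nothing computationally deep but rather getting the idempotent bookkeeping exactly right—verifying $e_{\lambda_0}\lambda=\delta_{\lambda,\lambda_0}\lambda$ via the orthogonality relation $\frac{1}{|V|}\sum_{v}\lambda_0(v)\lambda(-v)=\delta_{\lambda,\lambda_0}$, which is precisely where the hypothesis that $q\neq 0$ in $F$ (and hence $|V|=q^n$ is invertible) is used. An alternative to the idempotent, should one prefer to avoid invoking $e_{\lambda_0}$ explicitly, is a Vandermonde-style argument: the finitely many elements $vg$ for suitably chosen $v$ have coordinate vectors (in the $\hat V_g$ directions) forming an invertible matrix, so each $\lambda\in\hat V_g$ is recovered as an $F$-combination of the $vg$. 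Either route is routine once the eigenvector formula $v\lambda=\lambda(-v)\lambda$ is in hand, so the proof is short and the only real content is the diagonalization of the $V$-action.
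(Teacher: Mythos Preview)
Your proof is correct and follows essentially the same approach as the paper's: both obtain the easy inclusion $FVg\subseteq\mathrm{span}_F(\hat V_g)$ from the fact that each $F\lambda$ is $V$-stable (you make the eigenvector computation $v\lambda=\lambda(-v)\lambda$ explicit, whereas the paper simply asserts the span is an $FV$-submodule), and both use the idempotent $e_{\lambda_0}=\frac{1}{|V|}\sum_{v}\lambda_0(v)v$ together with orthogonality to extract each $\lambda_0\in\hat V_g$ from $g$ for the reverse inclusion.
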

\begin{proof}It is clear that the $F$-span of $\hat V_g$ contains $g$. Since
the span is itself an $FV$-submodule, it contains the $F$-submodule $FVg$ 
generated by $g$. Conversely, for each $\lambda\in\hat V$,
$FV$ contains the idempotent $e_\lambda=\frac{1}{\abs{V}}\sum_{v\in V}\lambda(v)v$
and an easy computation using the orthogonality relations shows that $e_\lambda$ acts as the identity on $\lambda$ while annihilating all other characters. Thus $e_\lambda g=a_\lambda\lambda$, and so $FVg$ contains $\hat V_g$.
\end{proof}

\begin{remark}
	In coding theory terms, for each $g \in F^V$, the $F$-submodule $FVg$ is the ideal $F[V]$ generated by $g$. The lemma is saying that the dimension of $FVg$ is the number of nonroots of $g$. 
\end{remark}

We are now ready to complete the proof of Theorem~\ref{main}. If $\ell=\{(1:a_1+tb_1:a_2+tb_2:\cdots :a_n+tb_n)\mid t\in \Fq\}$, then 
\begin{equation}\label{line_image}
\sigma\eta([\ell])=\sum_{t\in\Fq}\delta_{(0,a_1+tb_1,a_2+tb_2,\ldots, a_n+tb_n)}.
\end{equation}
In particular, if $\ell \in L$ is a line through $(1:0:0:\cdots:0)\in P$ then
the elements of $V$ corresponding to points of $\ell$ 
form the one-dimensional subspace $W_\ell$ of $V$ generated by $(0,b_1,b_2\cdots,b_n)$.
Thus,
\begin{equation}
\begin{aligned}
\sigma\eta([\ell])&=\sum_{w\in W_\ell}\delta_{w}\\
&=\frac{1}{\abs{V}}\sum_{w\in W_\ell}\sum_{\lambda\in\hat V}\lambda(-w)\lambda\\
&=\frac{1}{\abs{V}}\sum_{\theta\in V^*}[\sum_{w\in W_\ell}\lambda_\theta(w)]\lambda_\theta,
\end{aligned}
\end{equation}
since $\hat V$ is parametrized by $V^*$ and negation permutes $W_\ell$. By Lemma~\ref{kertheta}, the coefficient of $\lambda_\theta$ is nonzero if and only if
$W_\ell\subseteq\Ker(\theta)$, or in other words,
$$
\hat V_{\sigma\eta([\ell])}=\{\lambda_\theta\in \hat V \mid\, W_\ell\subseteq\Ker(\theta)\}.
$$ 

Let $L_0\subset L$ denote the set of lines through $(1:0:0:\cdots:0)\in P$.

By Lemma~\ref{FVg} the set 
\begin{equation}\label{imagebase}
\{\lambda_\theta\in \hat V \mid\ \exists \ell \in L_0,\   W_\ell \subseteq\Ker(\theta)\}.
\end{equation}
is a basis for the $FV$-submodule of $F^V$  generated by the images of lines in $L_0$. 
As every line of $L$ is in the $V$-orbit of a line through $(1:0:0:\cdots:0) \in P$, this submodule
is equal to $\image(\sigma\eta)$.
For each $\ell \in L_0$,  the subspace $W_\ell$ may be viewed as a point of $H$, 
and as such it is the point at infinity of the line $\ell$. In this way, $L_0$ 
corresponds bijectively with $\K$, so the basis  \eqref{imagebase} of $\image(\sigma\eta)$
is equal to
\begin{equation}\label{imagebase2}
\{\lambda_\theta\in \hat V \mid\ \exists u\in \K,\  \theta(u)=0\}.
\end{equation}
Thus, the first statement of Theorem~\ref{main} is proved.
For the last statement of  Theorem~\ref{main} we observe that
 nonzero linear functions in $V^*$ define hyperplanes of $H$ and 
two such functions define the same hyperplane if and only if each is a
nonzero scalar multiple of the other. Thus, if $h_\K$ denotes the
number of hyperplanes of $H$ that meet $\K$, there are 
$1+(q-1)h_\K$ linear functions in $V^*$ that take the value $0$ at some point of $\K$.

\section{The codes associated to $T_{n-1}^*(\mathcal K)$}\label{planewords}

\subsection{The Code $\mathcal C$} The code $\CC\leq F^L$ is defined as the kernel of $\eta$. That is, $\CC$ is the $F$-linear code with $N$ as its parity check matrix, hence its dimension
can be found immediately from Theorem~\ref{main}. 
We shall next define a subcode $\CC'$, following \cite{V2}. 
A {\it plane word} is defined as follows.
Let $u_1$ and $u_2$ be two points of $\K$. There are $\frac{q^{n-1}-1}{q-1}$ planes in $\PG(E)$ containing the line joining $u_1$ and $u_2$, among which there are $\frac{q^{n-2}-1}{q-1}$ planes contained in $H$. We shall refer to these $q^{n-2}$ planes not contained in $H$ as \textit{affine planes}.  Let $T$ be an affine plane of
$\PG(E)$ whose line at infinity in $H$ is the line joining $u_1$ and $u_2$. Then the 
plane word $w(u_1, u_2, T)$  is the sum of the characteristic functions of
the $q$ affine lines of $T$ having $u_1$ as point at infinity minus the 
corresponding sum with respect to $u_2$. A plane word clearly belongs to $\CC$ 
and it has weight $2q$. We denote by $\CC'$ the subcode spanned
by the plane words for all possible choices of $u_1$, $u_2 \in \mathcal K$ and $T$.

\begin{theorem}\label{codes} Assume that $q\neq 0$ in $F$. Then $\CC'=\CC$. \end{theorem}
\begin{proof} Since we will be considering geometries and codes associated with different subsets $\K$ of $H$, we will adopt
notation such as $L_\K$ for the set of lines of the geometry $T_{n-1}^*(\mathcal K)$,  $\eta_\K$ for the incidence map, and $\CC_\K$, $\CC'_\K$ for the codes.
We proceed by induction on $\abs{\K}$, the case of $\K=\emptyset$ being trivial.
Assume inductively that for some $\K$ we have $\CC_\K=\CC'_\K$ and let $u_0$ be a point of $H$ outside $\K$, and $\K'=\K\cup\{u_0\}$.
We shall consider the vector space $\overline E=E/u_0$ and its projective space $\PG(\overline E)$. We use the bar convention for images  in $\PG(\overline E)$ of
objects in $\PG(E)$. Thus $\overline {H}=\PG(\overline V)$ is a hyperplane of $\PG(\overline E)$ and $\overline {P}$ is its affine complement. The image $\overline \K$
of $\K$  in $\overline {H}$ has one point for each line of $H$ through $u_0$ that meets $\K$. Let $\overline {L}$ denote the set of affine lines in $\overline {P}$ with point at infinity in $\overline\K$. 
Under the projection from $\PG(E)$ to $\PG(\overline E)$ the set $L_{\{u_0\}}$ maps bijectively to $\overline {P}$ and the set $\mathcal T$ of affine planes $T$ with line
at infinity passing through $u_0$ and some point of $\mathcal K$ maps bijectively to $\overline {L}$.
 Naturally, these bijections preserve incidence. Thus, the incidence system $(L_{\{u_0\}},\mathcal T)$ is isomorphic to $T_{n-2}^*(\overline\K)$. 
In the decomposition
\begin{equation}
F^{L_{\K'}}=F^{L_{\{u_0\}}}\oplus F^{L_\K}
\end{equation}
let $\pi$ be the projection onto the first summand $F^{L_{\{u_0\}}}$.  Then the projection $\pi(w(u_0,u_1,T))$ of a plane word is just the sum of the characteristic functions of the $q$ affine lines of $T$ having $u_0$ as point at infinity.
Then, under the isomorphism  of $F^{L_{\{u_0\}}}$ with  $F^{\overline {P}}$ 
induced by the above bijection, these $q$ affine lines become the
$q$ points of the affine line $\overline T$, 
so  the image in  $F^{\overline {P}}$  of $\pi(w(u_0,u_1,T))$ is the characteristic function of $\overline T$. 
It follows that $\pi(\CC'_{\K'})$ is isomorphic to the subspace of $F^{\overline {P}}$ 
spanned by the characteristic functions of lines  in $\overline {L}$,
so its dimension is equal to the rank of 
$\eta_{\overline\K}:F^{\overline {L}}\to F^{\overline {P}}$.
Since $\pi(\CC'_\K)=\{0\}$ we have shown that
\begin{equation}
\dim \CC'_{\K'}-\dim\CC'_\K\geq \rank_F\eta_{\overline\K}.
\end{equation}
By our induction hypothesis and the fact that $\mathcal C_{\K^\prime}^\prime \subseteq \mathcal C_{\K^\prime}$, we therefore obtain
\begin{equation}\label{diffC}
\dim \CC_{\K'}-\dim\CC_\K\geq \rank_F\eta_{\overline\K},
\end{equation}
with equality if and only if $\CC'_{\K'}=\CC_{\K'} $.
On the other hand by considering the linear maps $\eta_\K$  and $\eta_{\K'}$ we obtain
\begin{equation}
\dim \CC_{\K'}-\dim\CC_\K=q^{n-1}-(\rank_F\eta_{\K'}-\rank_F\eta_\K).
\end{equation}
By Theorem~\ref{main}, 
\begin{equation} 
\begin{aligned}
\rank_F\eta_{\K'}-\rank_F\eta_\K&=\#\{\theta \in V^*\mid \text{ $\theta(u_0)=0$ and $\theta(u)\neq 0$ for all $u\in \K$}\}\\
&=\#\{\theta \in \overline V^*\mid \text{ $\theta(u)\neq 0$ for all $u\in \overline\K$}\}\\
&=q^{n-1}-\rank_F\eta_{\overline\K}.
\end{aligned}
\end{equation}
Therefore, we have equality in \eqref{diffC} and the inductive step is established.
\end{proof}

\begin{remark}
Theorem~\ref{codes} was conjectured in \cite{V2}, where the case $n=3$ was proved.
Now that Theorem~\ref{codes} is established, the minimum weight of $\CC$ (for arbitrary $n$ and $\K$) is given by  \cite[Theorem 5.4]{V2}. As long as $\CC\neq 0$ the plane 
words are words of minimum weight $2q$ (although not the only words of this weight in general). Thus, in all cases, $\CC$ is generated by its words of minimum weight. 
\end{remark}

\subsection{The Code $\mathcal D$}
We begin our discussion of $\mathcal D$ by describing a generating set of codewords with a geometric flavor similar to that of the plane words. First we need some expressions regarding indicator functions. If $U$ is an affine hyperplane of $P$, then we can view $U$ as a coset of a subgroup $U_0$ of $V$, say $U= U_0 + x$, where $U_0$ is the hyperplane parallel to $U$ containing $0$. We can express the indicator function of $U_0$ in terms of characters
\begin{equation}\label{chiU0}
	\begin{split}
		\chi_{U_0} = \frac{1}{q} \sum_{\lambda \in \widehat{(V/U_0)}} \lambda
	\end{split}
\end{equation}
	where $\widehat{(V/U_0)}$ denotes the characters of $\widehat{V}$ whose kernels contain $U_0$. The indicator function of the coset $U= U_0 +x$ is simply the translate of $\chi_{U_0}$ by $x$, given by
\begin{equation}
 	\begin{split}\label{chiU}
		\chi_{U} = \frac{1}{q} \sum_{\lambda \in \widehat{(V/U_0)}} \lambda(-x)\lambda.
	\end{split}
\end{equation}

Let $U_1, U_2$ be parallel affine hyperplanes of the affine space $P$
whose  common subspace at infinity is a hyperplane $T$ of $H$
that does not meet $\mathcal K$. We define a \textit{capacitor word} of $U_1, U_2$ to be the indicator function of $U_1$ minus that of $U_2$. 
\begin{theorem}
Let $F$ be a field with $q\neq 0$. The capacitor  words are codewords generating the $F$-code $\mathcal D_{\mathcal K}$.	
\end{theorem}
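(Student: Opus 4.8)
The plan is to transport the whole problem to the group-algebra side via the isomorphism $\sigma\colon F^P\to F^V$ and to work in the character basis $\hat V$, just as in the proof of Theorem~\ref{main}. The first step is to record a clean character-theoretic description of $\mathcal D$. Since $x\in\mathcal D$ means $\sum_{a\in\ell}x_a=0$ for every $\ell\in L$, the code $\mathcal D$ is the orthogonal complement of $\image(\eta)$ inside $F^P$ for the standard inner product; because $\sigma$ carries the standard basis to the standard basis it is an isometry, so $\sigma(\mathcal D)=\image(\sigma\eta)^{\perp}$ in $F^V$. The orthogonality relations give $\langle\lambda_\mu,\lambda_\theta\rangle=\sum_{v\in V}\lambda_{\mu+\theta}(v)=\abs V\,[\mu+\theta=0]$, so for $\sigma(x)=\sum_\mu a_\mu\lambda_\mu$ one has $\langle\sigma(x),\lambda_\theta\rangle=\abs V\,a_{-\theta}$. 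Feeding in the basis \eqref{imagebase2} of $\image(\sigma\eta)$ and using that the condition ``$\theta(u)=0$ for some $u\in\K$'' is invariant under $\theta\mapsto-\theta$, I obtain
\[
\sigma(\mathcal D)=\operatorname{span}\{\lambda_\mu\mid \mu(u)\neq 0\text{ for all }u\in\K\}.
\]

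Next I would expand a capacitor word in the character basis, which simultaneously shows it lies in this span (reproving that it is a codeword). Writing $U_1=U_0+x_1$ and $U_2=U_0+x_2$ with $U_0\leq V$ the parallel hyperplane through the origin, equation \eqref{chiU} gives, under the identification of $P$ with $V$,
\[
\chi_{U_1}-\chi_{U_2}=\frac1q\sum_{\mu\in U_0^{\circ}}\bigl(\lambda_\mu(-x_1)-\lambda_\mu(-x_2)\bigr)\lambda_\mu ,
\]
where $U_0^{\circ}=\{\mu\in V^*\mid \mu(U_0)=0\}$ is exactly the index set of $\widehat{(V/U_0)}$. The trivial character $\mu=0$ has coefficient $(1-1)/q=0$, while every nonzero $\mu\in U_0^{\circ}$ satisfies $\Ker(\mu)=U_0$, whose projectivization is the line-at-infinity $T$; since $T\cap\K=\emptyset$ we get $\mu(u)\neq 0$ for all $u\in\K$. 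Thus only characters from the displayed basis of $\sigma(\mathcal D)$ occur, so the capacitor word is a codeword.

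For the reverse containment I would span each basis vector. Fix a hyperplane $U_0\leq V$ through the origin whose projectivization $T$ misses $\K$. The $q$ indicator functions of the cosets of $U_0$ are linearly independent, and their $F$-span is the space of functions factoring through $V/U_0$, namely $\operatorname{span}\{\lambda_\mu\mid\mu\in U_0^{\circ}\}$. Each capacitor word of this parallel class is a difference of two coset indicators, hence has vanishing trivial-character coefficient, and conversely such differences span the whole codimension-one subspace $\operatorname{span}\{\lambda_\mu\mid\mu\in U_0^{\circ}\setminus\{0\}\}$. Letting $U_0$ range over all hyperplanes of $V$ with $T\cap\K=\emptyset$, the sets $U_0^{\circ}\setminus\{0\}$ partition $\{\mu\neq0\mid\mu(u)\neq0\text{ for all }u\in\K\}$, since each nonzero $\mu$ has the unique kernel $\Ker(\mu)=U_0$. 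Hence the capacitor words span all of $\sigma(\mathcal D)$, and applying $\sigma^{-1}$ yields the theorem.

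I expect the step needing the most care to be the first one: correctly pinning down $\sigma(\mathcal D)$ in the character basis, in particular tracking the sign in $\langle\lambda_\mu,\lambda_\theta\rangle=\abs V\,[\mu+\theta=0]$ and checking that the defining condition is symmetric under negation, so that the orthogonal complement is again spanned by characters and none are lost. Once that description is secured, the spanning argument is essentially the observation that, one parallel class at a time, the capacitor words realize every nontrivial character whose kernel-hyperplane avoids $\K$, which is exactly the promised basis of $\mathcal D$.
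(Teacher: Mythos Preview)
Your proof is correct and rests on the same character-theoretic picture as the paper's, but the organization differs in two places worth noting. First, to identify the target space the paper quotes Theorem~\ref{main} for $\dim\mathcal D_\K$ and then shows that each admissible character $\lambda_\theta$ lies in the span $W$ of the capacitor words; you instead compute $\sigma(\mathcal D)=\image(\sigma\eta)^\perp$ directly in the character basis, obtaining the span $\{\lambda_\mu:\mu(u)\neq0\text{ for all }u\in\K\}$ without a separate dimension count. Second, for the spanning step the paper uses Lemma~\ref{FVg}: since translates of capacitor words are capacitor words, $W$ is an $FV$-submodule, so it suffices to exhibit for each admissible $\theta$ a single capacitor word in which $\lambda_\theta$ occurs with nonzero coefficient, and the paper does this with the word based on $U_0$ and $U_0+x$ for any $x$ with $\lambda_\theta(x)\neq1$. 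You instead handle an entire parallel class at once, observing that the differences of the $q$ coset indicators of $U_0$ span the full $(q-1)$-dimensional space $\operatorname{span}\{\lambda_\mu:\mu\in U_0^{\circ}\setminus\{0\}\}$, so no appeal to the $FV$-module structure is needed. Both arguments are short; yours is a bit more self-contained, while the paper's reuses Lemma~\ref{FVg} so that the same tool drives both Theorem~\ref{main} and this result.
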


\begin{proof}
  Let $w$ be a capacitor word on the affine hyperplanes $U_1$ and $U_2$. The condition
  on their common subspace at infinity implies that every line $\ell \in L$ meets the hyperplanes $U_1$ and $U_2$ each at exactly one point, so that $w$ is indeed a codeword of $\mathcal D_\mathcal K$.
  To show that the capacitor words generate $\mathcal D_{\K}$, let $W$ be the $F$-subspace generated by the capacitor words on all pairs of parallel affine hyperplanes with
  subspace at infinity  not meeting $\K$. Then $W\subseteq \mathcal D_{\K}$.  By Theorem~\ref{main} the dimension of $\mathcal D_{\K}$ is equal to the number of functions in $V^*$ that do not contain any point of $\K$ in its kernel. Each such function $\theta\in V^*$ corresponds bijectively to a character $\lambda_\theta\in\widehat{V}$, as in \S2. We shall show that each such character lies in $W$, which will imply that $\dim W\geq \dim \mathcal D_{\K}$, and hence that $W=\mathcal D_{\K}$.
    Thus, let $\theta\in V^*$ be given, such that no point of $\K$ lies in
    $\Ker(\theta)$. Let $U_0=\Ker(\theta)$, an $\Fq$-hyperplane of $V$, and let
    $\lambda_\theta\in\widehat{V/U_0}$ be the corresponding character.
    Our aim is to show that  $\lambda_\theta\in W$. By Lemma~\ref{FVg}, it suffices
    to show that $\lambda_\theta$ appears with nonzero coefficient in some
  capacitor word when the latter is expressed as a linear combination of characters.
  We can choose $x\in V$ such that $\lambda_\theta(x)\neq 1$. Then $x\notin U_0$, and
  from \ref{chiU0} and \ref{chiU} we see that the indicator function of the capacitor word based on $U=U_0+x$ and $U_0$ is equal to
  \begin{equation}
   \chi_{U}-\chi_{U_0} = \frac{1}{q} \sum_{\lambda \in \widehat{(V/U_0)}}
   (\lambda(-x)-1)\lambda.
\end{equation}
  By choice of $x$, the coefficient of $\lambda_\theta$ in this expression is nonzero,
  and the proof is complete.
  \end{proof}

Similarly, we can define a codeword whose weight depends on the dimension of the subspace spanned by the points of $\mathcal K$. Suppose that the points of $\mathcal K$ span a $d$-dimensional subspace of $H$ denoted by $X$, and let $Y$ be an affine $(d+1)$-dimensional space whose intersection at infinity is $X$, and $T$ a hyperplane of $X$ not meeting $\mathcal K$. Then let $S_1$ and $S_2$ be hyperplanes of $Y$ whose intersections at infinity are both $T$. Define a \textit{d-capacitor word} as the sum of the characteristic functions of points of $S_1$ minus the corresponding sum of points of $S_2$. It is easy to see that this is indeed a codeword of $\mathcal D$ of weight $2q^d$: given any line $\ell$ of $L$, either $\ell$ has no points contained in $Y$, or $\ell$ is totally contained in $Y$. In the latter case, $\ell$ must meet each of the hyperplanes $S_1$ and $S_2$ in exactly one point, and hence we have a sum of zero when summing over the points incident to $\ell$. 

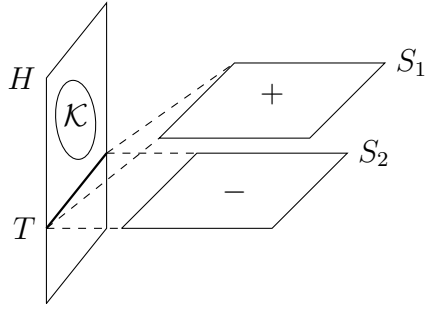
\begin{figure}
\begin{tikzpicture}[scale = 1]
	\draw (0,0) -- (0,3) -- (.8,4) -- (.8,1) -- (0,0);%H
	\draw (1.5,2.2) -- (2.5,3.2) -- (4.5,3.2) -- (3.5,2.2) -- (1.5,2.2); %S1
	\draw (1,1) -- (2,2) -- (4,2) -- (3,1) -- (1,1); %S2
	\draw [line width=.3mm](0,1) -- (.8,2); %T
	\draw [dashed] (0,1) -- (1.5,2.2);
	\draw [dashed] (.8,2) -- (2.5,3.2);
	\draw [dashed] (0,1) -- (1,1);
	\draw [dashed] (.8,2) -- (2,2);
	\node[above] at (3,2.5){$+$};
	\node[above] at (2.5,1.2){$-$};
	\node[right] at (4.5,3.2){$S_1$};
	\node[right] at (4,2){$S_2$};
	\node[left] at (0,3){$H$};
	\node[left] at (0,1){$T$};
	\node[left] at (.7,2.5){$\mathcal K$}; 
	\draw[rotate=5] (.6,2.4) ellipse (7.5pt and 15pt);
\end{tikzpicture}
\caption{The hyperplanes giving a capacitor word. }
\end{figure}

Now we turn to the question of the minumum weight of $\mathcal D$. As mentioned
in the Introduction, we have a general lower bound of $\abs{\K}+1$ for the minimum
weight, but this bound may be improved for certain fields and subsets $\K$.
We record in the next lemma a few facts about the code $\mathcal D_\mathcal K$ that will be useful.  

\begin{lemma}\label{dfacts}
Let $F$ be any field which may or may not have $q=0$. 
	
\begin{enumerate}
	\item If $\mathcal K^\prime \subseteq \mathcal K$ then $\mathcal D_{\mathcal K} \subseteq \mathcal D_{\K^\prime}$, and hence $d(\mathcal D_{\mathcal K}) \geq d(\mathcal D_{\mathcal K^\prime})$. 
	\item If $w$ is a codeword of $\mathcal D_{\K}$, then any line $\ell$ in $L_\K$ is either skew to the support of $w$, or contains at least two points of the support of $w$. 
	\item $d(\mathcal D_{\mathcal K}) \geq |\mathcal K|+1$.
	\item If $w$ is a codeword of $\mathcal D_{\mathcal K}$ for $\mathcal K$ properly contained in a line, then either $wt(w) \geq 2|\mathcal K|+2$ or the support of $w$ is contained in an affine plane whose line at infinity contains $\mathcal K$. 
\end{enumerate}
	
\end{lemma}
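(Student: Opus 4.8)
The plan is to exploit the product structure that the hypothesis forces on the code. Let $m$ be the projective line of $H$ containing $\K$, and let $W$ be the corresponding two-dimensional $\Fq$-subspace of $V$, so that each point $u\in\K$ is a one-dimensional subspace $\langle u\rangle\subseteq W$. Identifying $P$ with $V$ as in \S\ref{proof}, every line $\ell\in L_\K$ is a coset $a+\langle u\rangle$ with $u\in\K$, and since $\langle u\rangle\subseteq W$ this coset is contained in the single coset $a+W$. Thus $L_\K$ is partitioned according to which coset of $W$ a line lies in, and the cosets of $W$ are exactly the affine planes of $P$ whose line at infinity is $m$.

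First I would record the resulting decomposition. Writing $F^P=\bigoplus_\pi F^\pi$, where $\pi$ ranges over the $q^{n-2}$ cosets of $W$, the parity check imposed by a line $\ell\subseteq\pi$ involves only the restriction $w|_\pi$. Hence $w\in\mathcal D_\K$ if and only if each $w|_\pi$ is a codeword of the code cut out on $\pi$ by the lines of $L_\K$ lying in $\pi$. That restricted code is precisely the code $\mathcal D_\K$ for the two-dimensional geometry $T_1^*(\K)$ inside $\pi\cong\AG(2,q)$, with $\K$ regarded as a subset of the line at infinity $m$.

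Next I would invoke part (3) of this lemma with $n=2$: any nonzero codeword of $\mathcal D_\K$ on a single affine plane has weight at least $\abs{\K}+1$. The conclusion then follows from a case split on how many cosets $\pi$ meet the support of $w$. Since $\mathrm{wt}(w)=\sum_\pi\mathrm{wt}(w|_\pi)$, if the support of $w$ meets two or more cosets then $\mathrm{wt}(w)\geq 2(\abs{\K}+1)=2\abs{\K}+2$; and if it meets exactly one coset $\pi$, then the support of $w$ is contained in the affine plane $\pi$, whose line at infinity $m$ contains $\K$, which is the second alternative.

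The only point requiring care, and the crux of the argument, is the direct-sum decomposition of the code: everything rests on the fact that no line of $L_\K$ crosses between two cosets of $W$, which is immediate once $\K$ is known to lie on the single line $m$. The remaining ingredients—that each restricted code is again a $\mathcal D$-code for a two-dimensional geometry, and that part (3) applies to it verbatim—are routine bookkeeping. I would also dispose of the degenerate cases $\abs{\K}\leq 1$ separately, where the line $m$ containing $\K$ need not be unique but the statement is either vacuous or trivial.
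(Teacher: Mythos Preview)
Your argument for part (4) is correct and is essentially the paper's: both observe that the affine planes whose line at infinity contains $\K$ partition $P$ and that every line of $L_\K$ lies in exactly one such plane, and then apply the $\abs{\K}+1$ bound of part (3) inside each plane separately---your direct-sum decomposition into copies of the two-dimensional code $\mathcal D_\K$ on $T_1^*(\K)$ is just a more explicit packaging of what the paper phrases as ``using the same argument as the proof of part (3)'' within each plane. Parts (1)--(3), which you do not treat, have one-line proofs in the paper (with (3) deferred to the Introduction), so the omission is harmless.
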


\begin{proof}
	
	For (1), if a word satisfies the parity-check equations of $N^T_\mathcal K$ then it also satisfies the equations of $N^T_{\mathcal K^\prime}$ for any $\mathcal K^\prime \subseteq \mathcal K$. 
	
	For (2), if a line $\ell \in L_\mathcal K$ meets exactly one point of the support of $w$, then $w$ does not satisfy the parity-check equation associated to $\ell$ and hence cannot be a codeword. 
	
	Part (3) was proven in the Introduction, so it only remains to prove part (4). Suppose there are two affine planes $\pi_1, \pi_2$ with line at infinity containing $\mathcal K$ and both containing a point in the support of $w$. Using the same argument as the proof of part (3), each plane $\pi_1, \pi_2$ must contain at least $|\mathcal K|+1$ distinct points so that $w$ has weight at least $2|\mathcal K|+2$.   	
\end{proof}

\begin{remark}
	
	Part (4) of Lemma~\ref{dfacts} implies that when $\mathcal K$ is properly contained in a line, either $d(\mathcal D_{\mathcal K}) \geq 2|\mathcal K|+2$ or $\mathcal D_{\mathcal K}$ has the same minimum distance as the code given by the parity-check matrix $N^T_{\mathcal K}$ restricted to the lines of $T_1^*(\mathcal K)$. 
	 	
\end{remark}

\begin{theorem}\label{Kline}
	
	Let $F$ be a totally ordered field. Then the minimum distance of the $F$-code $\mathcal D_{\mathcal K}$ is at least $2|\mathcal K|$. In particular, when $F= \Q$ or $\R$ then $d(\mathcal D_{\mathcal K})\geq 2|\mathcal K|$.
	
\end{theorem}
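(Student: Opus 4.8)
The plan is to exploit the order structure of $F$ through a sign decomposition of the support of a codeword. Let $w$ be a nonzero codeword of $\mathcal{D}_{\mathcal{K}}$; since $F$ is totally ordered it has characteristic zero, so in particular $q \neq 0$ and the results of the previous sections apply. Recalling that $w \in F^P$ satisfies $\sum_{a \in \ell} w(a) = 0$ for every line $\ell \in L$, I would split the support of $w$ into its positive and negative parts, writing $S^{+} = \{a \in P : w(a) > 0\}$ and $S^{-} = \{a \in P : w(a) < 0\}$, so that the support is the disjoint union $S^{+} \cup S^{-}$.

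The decisive observation is that, over an ordered field, every line that meets the support must meet both $S^{+}$ and $S^{-}$. Indeed, fix a line $\ell$ and rewrite its parity-check equation as
\[
\sum_{a \in \ell \cap S^{+}} w(a) = \sum_{a \in \ell \cap S^{-}} \bigl(-w(a)\bigr).
\]
Both sides are sums of strictly positive elements of $F$, so each side vanishes exactly when the corresponding intersection is empty; since the two sides are equal, $\ell$ meets $S^{+}$ if and only if it meets $S^{-}$. This is where total orderedness is essential: over a general field the two sides could cancel in more complicated ways, which is precisely why the weaker bound $\abs{\K}+1$ of Lemma~\ref{dfacts}(3) cannot be improved to $2\abs{\K}$ without such a hypothesis. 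Consequently, as soon as $\mathcal{K} \neq \emptyset$ (the case $\mathcal{K} = \emptyset$ being trivial, since then $\mathcal{D}_{\mathcal{K}} = F^P$ and $2\abs{\K} = 0$), any line through a point of the support forces a support point of the opposite sign, so both $S^{+}$ and $S^{-}$ are nonempty.

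Finally I would run a counting argument from a single point of each sign. Fix $s \in S^{+}$. Through $s$ pass exactly $\abs{\K}$ lines of $L$, one for each direction in $\mathcal{K}$, and any two of them meet only at $s$. By the observation above each of these lines contains a point of $S^{-}$, necessarily distinct from $s$ since $s \in S^{+}$; moreover the points obtained on distinct lines are themselves distinct, because two distinct lines through $s$ share no further point. This produces $\abs{\K}$ distinct points of $S^{-}$, so $\abs{S^{-}} \geq \abs{\K}$, and arguing symmetrically from a point $t \in S^{-}$ gives $\abs{S^{+}} \geq \abs{\K}$; adding these yields $\mathrm{wt}(w) = \abs{S^{+}} + \abs{S^{-}} \geq 2\abs{\K}$. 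The main obstacle here is conceptual rather than technical: one must recognize that the order on $F$ should be used to separate the support by sign, after which the geometric counting is elementary. The only point needing a little care is the distinctness claim, which is immediate from the fact that two distinct affine lines through a common point intersect in that point alone.
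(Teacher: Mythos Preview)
Your proof is correct and follows essentially the same approach as the paper: split the support of a nonzero codeword by sign, observe that the ordered-field parity check forces every line through a positive support point to meet $S^{-}$ (and vice versa), and then count the $\abs{\K}$ lines through a fixed point of each sign to obtain $\abs{S^{\pm}}\geq\abs{\K}$. Your write-up is in fact a bit more explicit than the paper's about the distinctness of the resulting points and the handling of the trivial case $\K=\emptyset$.
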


\begin{proof}
		
Let $w$ be a codeword in $\mathcal D_{\mathcal K}$, and let $s$ be a point in the support of $w$, where without loss of generality we assume that $w(s)>0$. There are $|\mathcal K|$ lines containing $s$, each of which contains a point $t$ in the support of $w$ with $w(t)<0$. Each such point $t$ is also met by $|\mathcal K|$ lines, each of which contains a point $u$ in the support of $w$ with $w(u)>0$. Therefore the sets
\[ \{x \in P : w(x) > 0  \}   \]
\[ \{x \in P : w(x) < 0  \}    \] 
both have cardinality at least $|\mathcal K|$. 	
\end{proof}

\begin{figure}
	\begin{tikzpicture}[scale=1]
		\draw (0,0) -- (0,1) -- (1,2) -- (2,2) -- (2,1) -- (1,0) -- (0,0);
		\draw[fill] (0,0) circle [radius=0.1];
		\draw[fill] (0,1) circle [radius=0.1];
		\draw[fill] (2,1) circle [radius=0.1];
		\draw[fill] (2,2) circle [radius=0.1];
		\draw[fill] (1,2) circle [radius=0.1];
		\draw[fill] (1,0) circle [radius=0.1];
		\node[left] at (0,1) {$+1$};
		\node[above] at (1,2) {$-1$};
		\node[above] at (2,2) {$+1$};
		\node[right] at (2,1) {$-1$};
		\node[below] at (1,0) {$+1$};
		\node[below] at (0,0) {$-1$};
		\draw (6,0) -- (5,1) -- (5,2) -- (6,3) -- (7,3) -- (8,2) -- (8,1) -- (7,0) -- (6,0);
		\draw[fill] (6,0) circle [radius=0.1];
		\draw[fill] (5,1) circle [radius=0.1];
		\draw[fill] (5,2) circle [radius=0.1];
		\draw[fill] (6,3) circle [radius=0.1];
		\draw[fill] (7,3) circle [radius=0.1];
		\draw[fill] (8,2) circle [radius=0.1];
		\draw[fill] (8,1) circle [radius=0.1];
		\draw[fill] (7,0) circle [radius=0.1];
		\node[below] at (6,0) {$+1$};
		\node[left] at (5,1) {$-1$};
		\node[left] at (5,2) {$+1$};
		\node[above] at (6,3) {$-1$};
		\node[above] at (7,3) {$+1$};
		\node[right] at (8,2) {$-1$};
		\node[right] at (8,1) {$+1$};
		\node[below] at (7,0) {$-1$};
	\end{tikzpicture}
\caption{The codewords $w$ (left) and $w^\prime$ (right) of weights $6$ and $8$, respectively, given in Example~\ref{kgons}.}
\end{figure}
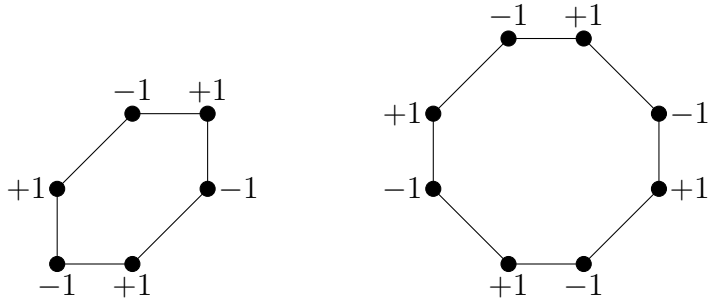

\begin{example}
	Let $\ell$ be a line of $\PG(n,q)$ contained in $H$, and let $v$ be a point on $\ell$. Let $\mathcal K$ be a subset of $\ell \setminus \{v\}$. If $\ell_1$ and $\ell_2$ are any two affine lines with point at infinity $v$, and both lying in an affine plane with line at infinity $\ell$, then the word $\chi_{\ell_1}-\chi_{\ell_2}$ is a codeword of $\mathcal D_{\mathcal K}$ of weight $2q$.		
\end{example}

\begin{example}\label{kgons}
	
  When $\mathcal K$ is a particular set of $3$ or $4$ points contained in a line we can give explicit codewords of weight $6$ and $8$, respectively. As proved in Lemma~\ref{dfacts} these words are contained in an affine plane.
  In these examples, we assume that the characteristic of $\Fq$
  is large enough for the given points to be distinct. Let \[ \mathcal K = \{ \langle (1,0) \rangle, \langle (0,1) \rangle, \langle (1,1) \rangle \}, \] \[ \mathcal K^\prime = \{ \langle (1,0) \rangle, \langle (0,1) \rangle   ,\langle (1,-1) \rangle , \langle (1,1) \rangle \}, \]  Then set $w= \sum_{i=1}^6 (-1)^{i} \delta_{a_i}$,  $w^\prime= \sum_{i=1}^8 (-1)^{i} \delta_{b_i}$, where the points $a_i, b_i$ are given in Table 1. It is an easy exercise to check that $w \in \mathcal D_\mathcal K$ and $w^\prime \in \mathcal D_{\mathcal K^\prime}$, and that $\textnormal{wt}(w)=6$, $\textnormal{wt}(w^\prime)=8$. It would be interesting if there were a general construction for similar codewords of $\mathcal D_\mathcal K$ for arbitrary $\mathcal K$. 
	   
\begin{table}
\label{tab}
\caption{}
	\begin{tabular}{l|c|r}
		$i$ & $a_i$ & $b_i$ \\ 
		\hline 
		1 & (0,0) & (0,0) \\
		2 & (0,1) & (0,1) \\
		3 & (1,2) & (1,2) \\
		4 & (2,2) & (2,2) \\
		5 & (2,1) & (3,1) \\
		6 & (1,0) & (3,0) \\
		7 & ---  & (2,-1) \\
		8 & --- & (1,-1) \\
	\end{tabular}
\end{table}

\end{example}

The next examples shows that the minimum distance of $\mathcal D_\mathcal K$ can be less than $2|\mathcal K|$ when $q=0$ in $F$. 

\begin{example} Let $\ell$ be a line of $\PG(n,q)$ contained in $H$ and $\mathcal K=\ell$. Choose a point $v\in\mathcal K$. If $\ell_1$ and $\ell_2$ are any two affine lines with $v$ as point at infinity,  and both lying in a plane with $\ell$ as line at infinity then, in the special case that $q=0$,  the word $\chi_{\ell_1}-\chi_{\ell_2}$ is a codeword of $\mathcal D_{\mathcal K}$ of weight $2q=2\abs{\mathcal K}-2$.
\end{example}

\begin{example}
  In $\PG(n,2)$, let $\mathcal K=H$ and let $w$ be the sum of the characteristic functions of the points of $P=\PG(n,2)\setminus H$. That is, the support of $w$ is the set of points of the affine space $\textnormal{AG}(n,2)$. Every affine line has two points, so when $F=\mathbb{F}_2$ then $w$ is a codeword in $\mathcal D_\mathcal K$ of weight $\abs{\mathcal K}+1$, meeting the bound of \cite{KLF}. Of course, in this case $\mathcal D_\mathcal K=\{0,w\}$ is not a very interesting code.
\end{example}

\section{Applications}\label{app}
In this section we retain the general hypotheses of Theorems~\ref{main} and \ref{codes} and consider the implications of these theorems in some special cases. 

\begin{corollary}\label{fullrank} Assume $q\neq 0$ in $F$. If $\K$ contains a line
  of $H$ then $N$ has full rank $q^n$. 
\end{corollary}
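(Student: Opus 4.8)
The plan is to derive everything directly from Theorem~\ref{main}, reducing the claim to a counting statement about hyperplanes of $H$. Since the rows of $N$ are indexed by the $q^n$ points of $P$, the matrix $N$ has exactly $q^n$ rows, and ``full rank $q^n$'' means $\rank_F N = q^n$. By Theorem~\ref{main} we have $\rank_F N = 1 + (q-1)h_\K$, where $h_\K$ is the number of hyperplanes of $H \cong \PG(n-1,q)$ meeting $\K$. The total number of hyperplanes of $\PG(n-1,q)$ is $\frac{q^n-1}{q-1}$, so $h_\K$ is always at most this value, with equality precisely when \emph{every} hyperplane of $H$ meets $\K$. Thus it suffices to show that, under the hypothesis that $\K$ contains a line $\ell$ of $H$, every hyperplane of $H$ meets $\ell$ (and hence meets $\K$).

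To establish this, I would invoke the standard incidence fact about projective dimensions. In $\PG(n-1,q)$ a line $\ell$ has projective dimension $1$ and a hyperplane $M$ has projective dimension $n-2$, so by the Grassmann dimension formula for projective subspaces,
\[
\dim(\ell \cap M) \ge 1 + (n-2) - (n-1) = 0 .
\]
Hence $\ell \cap M \neq \emptyset$ for every hyperplane $M$, so every hyperplane of $H$ meets the line $\ell \subseteq \K$. Consequently $h_\K = \frac{q^n-1}{q-1}$, and substituting into Theorem~\ref{main} yields $\rank_F N = 1 + (q-1)\cdot\frac{q^n-1}{q-1} = q^n$, as required.

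There is no serious obstacle in this argument; the whole content is the geometric observation that a line meets every hyperplane in a projective space of the correct dimension, after which the numerology falls out of Theorem~\ref{main}. The only point meriting a remark is the degenerate case $n=2$, where $H = \PG(1,q)$: here a ``line of $H$'' is all of $H$, the ``hyperplanes'' of $H$ are its points, and $\K = H$ trivially meets every point, so the same conclusion $h_\K = q+1 = \frac{q^2-1}{q-1}$ and $\rank_F N = q^2$ holds.
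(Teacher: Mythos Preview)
Your proof is correct and follows exactly the same approach as the paper: apply Theorem~\ref{main} and observe that every hyperplane of $H$ meets a line contained in $\K$. The paper's proof is the one-line version of what you wrote, simply asserting that ``every hyperplane of $H$ must meet $\K$'' without spelling out the dimension count or the arithmetic $1+(q-1)\cdot\frac{q^n-1}{q-1}=q^n$.
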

\begin{proof} This is immediate since every hyperplane of $H$ must meet $\K$.
\end{proof}

\begin{remark} In the special case $\K=\PG(n-1,q)$ of this corollary,
$T_{n-1}^*(\K)$ is the 2-design of points and lines in affine space.
\end{remark}

\begin{remark} A {\it blocking set} in $\PG(2,q)$ is any set  of points that
meets every line. (Sets containing a line are considered to be trivial examples of blocking sets.) Clearly, by Theorem~\ref{main}, the set $\K$ is a blocking set
if and only if $N$ has full rank $q^n$. Baer subplanes are nontrivial examples.  
\end{remark}

\subsection{Wenger graphs}
Let $\K=\{(0:1:u:u^2:\cdots:u^{n-1})\mid u\in\Fq\}$. Then the bipartite graph having
$P$ and $L$ as the bipartition of its vertex set, with adjacency defined by point-line incidence, is called
the Wenger graph $W_{n-1}(q)$. Thus the matrix 
$$
A=\begin{pmatrix}0& N\\N^T&0
\end{pmatrix}
$$
is an adjacency matrix of $W_{n-1}(q)$.  This graph has many alternative descriptions. (See \cite{CLL}.)

From Theorem~\ref{main} we can derive the following formula for $\rank_FN$.

\begin{corollary}\label{wenger} Let $F$ be a field in which $q\neq 0$. Then
$\rank_FN$ is equal to the number of polynomials in $\Fq[X]$ of degree $\le n-1$
having a root in $\Fq$.
\end{corollary}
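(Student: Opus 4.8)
The plan is to apply the first statement of Theorem~\ref{main} directly, which identifies $\rank_F N$ with the number of functionals $\theta\in V^*=\Hom_\Fq(V,\Fq)$ that vanish at some point of $\K$, and then to translate this count into a count of polynomials using the explicit parametrization of $\K$.

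First I would write a general element of $V^*$ in dual coordinates as $\theta=[c_1,c_2,\ldots,c_n]$, so that $\theta(0,v_1,\ldots,v_n)=c_1v_1+\cdots+c_nv_n$. The point of $\K$ with parameter $u\in\Fq$ is $(0:1:u:u^2:\cdots:u^{n-1})$, corresponding to the vector $(0,1,u,\ldots,u^{n-1})\in V$. Evaluating $\theta$ there gives
\[
\theta(0,1,u,\ldots,u^{n-1})=c_1+c_2u+c_3u^2+\cdots+c_nu^{n-1},
\]
which I recognize as the polynomial $f_\theta(X):=c_1+c_2X+\cdots+c_nX^{n-1}$ evaluated at $u$. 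Thus $\theta$ vanishes at the point of $\K$ with parameter $u$ precisely when $f_\theta(u)=0$, and $\theta$ vanishes at some point of $\K$ precisely when $f_\theta$ has a root in $\Fq$.

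Next I would observe that the assignment $\theta=[c_1,\ldots,c_n]\mapsto f_\theta$ is a bijection between $V^*$ and the set of polynomials in $\Fq[X]$ of degree at most $n-1$: both sets have $q^n$ elements, and the map is visibly linear and invertible. Under this bijection, the subset of $V^*$ counted by Theorem~\ref{main}---those $\theta$ vanishing at some point of $\K$---corresponds exactly to the polynomials of degree $\le n-1$ having a root in $\Fq$. Combining this identification with Theorem~\ref{main} yields the stated formula.

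There is no genuine obstacle here; the entire argument is a direct translation through Theorem~\ref{main}. The one point deserving a word of care, rather than real difficulty, is the degenerate case: the zero functional $\theta=0$ corresponds to the zero polynomial, which vanishes at every element of $\Fq$ and so is correctly counted on both sides. This is consistent with the ``$+1$'' in the geometric formula $\rank_F N=1+(q-1)h_\K$, since the zero functional is the unique one not determining a hyperplane of $H$. No analysis of the distinctness of the $q$ points of $\K$, nor of the rational normal curve structure, is needed beyond the evaluation computed above.
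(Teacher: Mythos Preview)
Your proof is correct and follows essentially the same approach as the paper: both identify the evaluation $\theta(0,1,u,\ldots,u^{n-1})$ with the polynomial $c_1+c_2X+\cdots+c_nX^{n-1}$ evaluated at $u$, reducing the rank question to counting polynomials with an $\Fq$-root. The only cosmetic difference is that you invoke the first formulation of Theorem~\ref{main} (counting linear functionals directly, with the zero functional handled explicitly), whereas the paper invokes the hyperplane formulation $1+(q-1)h_\K$ and then observes that passing to scalar multiples is harmless on both the hyperplane side and the polynomial side; your version is arguably a touch cleaner for avoiding that step.
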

\begin{proof} As $H=\PG(V)$,  a point of $\K$ is a one-dimensional subspace of $V$
of the form $\langle (0,1,u,u^2,\ldots,u^{n-1})\rangle$.
A hyperplane of $H$ is defined by a nonzero linear function on $V$, which we can write in dual coordinates as $\theta=[c_1:c_2:\cdots:c_n]$. The point lies on the hyperplane
if and only if $0=\theta((0,1,u,u^2,\ldots,u^{n-1}))=c_1+c_2u+\cdots+c_nu^{n-1}$.
Thus, the hyperplane meets $\K$ if and only if the polynomial
$c_1+c_2X+\cdots+c_nX^{n-1}$ has a root in $\Fq$. Since a nonzero scalar multiple
of a linear function defines the same hyperplane and a nonzero scalar multiple
of a polynomial has the same roots, the corollary now follows from Theorem~\ref{main}.
\end{proof}

\begin{remark}
It is an easy exercise to count polynomials of degree at most $ n-1$ having no root in $\Fq$. The number of them is
\begin{equation}\label{formula}
(q-1)\sum_{d=0}^{n-1}\sum_{k=0}^d(-1)^k\binom{q}{k}q^{d-k}.
\end{equation}
(cf. \cite[Lemma 2.2]{CLL}.)  It then follows that 
$$
\rank_FN = q^n - (q-1)\sum_{d=0}^{n-1}\sum_{k=0}^d(-1)^k\binom{q}{k}q^{d-k}
$$
\end{remark}
\begin{remark}
As pointed out in \cite{CRV}, the incidence system $T_{n-1}^*(\K)$ is dual, 
in the sense of interchanging the roles of points and lines, to the system (actually several isomorphic systems) described in \cite{CLL}. Of course, dual systems give rise to 
isomorphic bipartite graphs, so \cite{CRV} and \cite{CLL} are studying the same bipartite graphs.
\end{remark}
\begin{remark}
In \cite{CLL}, a proposed open problem was to determine the parameters
of the linear codes whose Tanner graphs are the Wenger graphs. These correspond to the code we have called $\CC$ and its dual. The minimum weight of $\CC$ is $2q$, by Theorem~\ref{codes}. Our corollary shows that \eqref{formula} gives the dimension of such a code over a field where  $q\neq 0$, as the code is defined as  the nullspace of $N$ (or $N^T$).  As mentioned in the Introduction, the dimension could also be deduced by combining Theorem~\ref{main} with the known multiplicity from \cite{CLL} of the eigenvalue zero. This is because Theorem~\ref{main}
shows that the rank is the same for all fields where $q\neq 0$ and is
in particular equal to the rank in characteristic zero, which, for a symmetric
real matrix, is the matrix size  minus the algebraic multiplicity of zero.
The problem of computing $\rank_FN$ has also been considered in unpublished work of M. Tait and C. Timmons, where the formula for $\rank_FN$ in the case $n=4$ was correctly conjectured. 
\end{remark}
\begin{remark}
If $n=3$, the Wenger graphs coincide with the graphs in \cite{LU}
and the codes having these as their Tanner graphs
are denoted $\mathrm{LU}(3,q)$ and $\mathrm{LU}(3,q)^D$. 
They were  studied in \cite{KPPPF} where a conjecture for the dimension of the binary code was made. The conjecture was proved in \cite{SX} and the result generalized to other fields in \cite{V}, where also the connection to $T_2^*(\K)$ was made.
\end{remark}

\subsection{Hyperovals}
Assume $n=3$, and let $q$ be a power of $2$ and $\K$ a hyperoval in $H\cong\PG(2,q)$.
Then it is well known  that $T_2^*(\K)$ is a generalized  quadrangle of order $(q-1,q+1)$ (\cite[3.1.3]{FGQ}).
By definition of a hyperoval, each line of $H$ meets $\K$ in $0$
or $2$ points, and so there are $\binom{q+2}{2}$ lines that meet $\K$.
Therefore,
$$
\rank_FN=1+(q-1)\binom{q+2}{2}
$$ 
for any field $F$ that is not of characteristic $2$. This rank formula does not depend on the choice of hyperoval $\K$.
Note that if we now modify $\K$ by removing any point, then the rank will be unchanged, since certain secant lines through the point just become tangent lines, and the
same lines meet $\K$.   
These ranks were previously computed in \cite{V2} by a different method.

Finally, we could drop our assumption of coprime characteristics and
consider the $\mathbb F_2$-rank of $N$ for hyperovals. This seems to be an interesting
and difficult problem with possible applications in coding theory. 
Some computer results are tabulated in \cite{V2}, and the $\mathbb F_2$-ranks will depend on the choice of hyperoval $\K$.
%\subsection{Blocking sets}
%\subsection{Baer subplanes}

\end{document}